\documentclass[11pt]{amsart}

\usepackage[T1]{fontenc}
\usepackage{amsmath,amssymb,amsthm}
\usepackage{enumitem}
\usepackage[margin=1.15in]{geometry}
\usepackage[expansion=false]{microtype}
\usepackage{xcolor}
\usepackage[colorlinks=true,linkcolor=blue!55!black,citecolor=blue!55!black]{hyperref}

\newcommand{\NN}{\mathbb{N}}
\newcommand{\CC}{\mathbb{C}}

\newcommand{\cH}{\mathcal{H}}
\newcommand{\cD}{\mathcal{D}}
\newcommand{\cO}{\mathcal{O}}
\newcommand{\Sfin}{S_{\mathrm{fin}}}
\newcommand{\cc}{c_{00}}
\newcommand{\czero}{c_{0}}
\DeclareMathOperator{\Seq}{Seq}
\DeclareMathOperator{\supp}{supp}

\theoremstyle{plain}
\newtheorem{theorem}{Theorem}[section]
\newtheorem{corollary}[theorem]{Corollary}
\newtheorem{proposition}[theorem]{Proposition}
\newtheorem{lemma}[theorem]{Lemma}
\theoremstyle{definition}
\newtheorem{definition}[theorem]{Definition}
\theoremstyle{remark}
\newtheorem{remark}[theorem]{Remark}
\newtheorem{example}[theorem]{Example}

\begin{document}

\title[Rigidity of Symmetric Holomorphic Functions]{Rigidity of Symmetric
Holomorphic Functions on Infinite-Dimensional Sequence Spaces}

\author{David Feldman}
\address{Department of Mathematics and Statistics, University of New
Hampshire, Durham, NH 03824, USA}
\email{david.feldman@unh.edu}

\author{Jon Bannon}
\address{Department of Mathematics, Siena College, Loudonville, NY 12211, USA}
\email{jbannon@siena.edu}

\date{\today}

\subjclass[2020]{Primary 46G20; Secondary 32A05, 46E50, 46B45}
\keywords{Infinite-dimensional holomorphy, symmetric functions,
permutation invariance, rigidity, null sequences, Banach limits,
zero-set map}

\begin{abstract}
We study holomorphic functions on permutation-invariant subsets of an
infinite-dimensional sequence space that are invariant under all finite
permutations of coordinates. Working on domains modeled on the space
$\czero$ of null sequences, we show that permutation symmetry forces the
first differential to vanish along every constant tail: the coordinate
derivatives agree there and an averaging argument annihilates their common
value. This yields rigidity---constancy---on domains of constant sequences,
and more generally on any $\cc$-chain-connected domain on which the
differential vanishes on the finitely supported directions. The method has
a sharp boundary: a precise obstruction prevents the elementary argument
from reaching the finitely many exceptional coordinates of a non-constant
point, so unconditional rigidity on general eventually-constant domains
remains open. In the other direction, an explicit example---any Banach
limit on $\ell^\infty$---shows that the chain-connectedness and boundedness
hypotheses cannot be dropped: mere topological connectedness admits
non-constant symmetric entire functions.
We record the vector-valued corollary and discuss the motivating
application to holomorphic cross-sections of the zero-set map, isolating the
lifting problem that remains open.
\end{abstract}

\maketitle

\section{Introduction}
\label{sec:intro}

In finite dimensions the holomorphic functions on $\CC^n$ invariant under
all permutations of coordinates form a rich algebra, freely generated by
the elementary symmetric polynomials $e_1,\ldots,e_n$. In infinite
dimensions this richness is sharply curtailed: on the natural sequence
spaces built from null sequences, a symmetric holomorphic function is
severely constrained, and on the basic domains is forced to be constant.
The mechanism is that the elementary symmetric functions
$e_k=\sum_{|I|=k}\prod_{i\in I}x_i$ cease to converge for generic
sequences, and this failure is visible already at the level of first
derivatives.

This rigidity phenomenon holds under precise hypotheses, and it has a
definite boundary beyond which it fails. That boundary is real rather than
apparent: the global statement that \emph{every} symmetric holomorphic
function on a connected permutation-invariant subset of $\ell^\infty$ is
constant is false. Any Banach limit furnishes a counterexample
(Example~\ref{ex:banach}). The rigidity is a feature of the null-sequence
geometry, not of symmetry alone.

\subsection*{The main result, informally}
Write $\cc$ for the finitely supported sequences and $\czero$ for the null
sequences, so that $\overline{\cc}=\czero$ in the supremum norm.
Theorem~\ref{thm:rigidity} states: if $X$ is a locally box-open,
permutation-invariant, $\cc$-chain-connected subset of a translate of
$\czero$, and $f:X\to\CC$ is admissibly holomorphic with bounded
differential, invariant under all finite permutations, \emph{and} has
differential vanishing on $\cc$ at every point, then $f$ is constant. The
last hypothesis is essential. The analytic core discharges it
unconditionally at constant points---hence yields outright rigidity on
domains of constant sequences---and along the constant tail of any
eventually-constant point, but \emph{not} at the finitely many exceptional
coordinates of a non-constant point, where a definite obstruction
intervenes (Remark~\ref{rem:obstruction}).

The proof of the vanishing has two elementary movements. First, at any
point $x$ whose coordinates are eventually equal to a constant, the
coordinate derivatives $Df(x)[e_i]$ agree for all indices $i$ in the
constant tail, because the transposition of two tail indices fixes $x$.
Second, averaging $N$ such tail directions produces a perturbation of
supremum norm $1/N\to 0$; a bounded differential then forces the common tail
value to vanish. Constancy then propagates from vanishing derivatives along
finitely supported segments, and $\cc$-chain-connectedness spreads it across
$X$.

\subsection*{Two structural subtleties}
Two features of the setting govern both the reach and the limits of the
argument.

\emph{The base point moves under permutation.} With the coordinate action
$(x\circ\sigma)_n=x_{\sigma(n)}$, the transposition $\tau_{ij}$ satisfies
$(x+\varepsilon e_i)\circ\tau_{ij}=(x\circ\tau_{ij})+\varepsilon e_j$,
which equals $x+\varepsilon e_j$ only when $x_i=x_j$. In general one
obtains
\[
  Df(x)[e_i]=Df(x\circ\tau_{ij})[e_j],
\]
an identity relating the differentials at \emph{two different} base
points. The conclusion ``all coordinate derivatives at $x$ coincide'' is
therefore valid at points fixed by the relevant transpositions---in
particular along the constant tail of an eventually-constant sequence---but
not, by this step alone, at an arbitrary point.

\emph{Finitely supported segments do not connect $\ell^\infty$.} A finite
chain of $\cc$-perturbations alters only finitely many coordinates, so it
can never join $0$ to $(1,1,1,\ldots)$. Vanishing of the differential on
$\cc$ controls $f$ only within a single coset of $\overline{\cc}=\czero$;
across cosets, symmetric holomorphic functions may vary freely, and
Example~\ref{ex:banach} shows the global statement is false on
$\ell^\infty$.

\subsection*{Relation to existing literature}
The principle that infinite symmetry suppresses analytic variation is
familiar in spirit; it is reminiscent of de Finetti-type rigidity in
probability, where exchangeability forces a mixture representation. The
role of the null-sequence geometry, and the Banach-limit obstruction on
$\ell^\infty$, are treated explicitly here.
The requisite background in infinite-dimensional holomorphy is in
Mujica~\cite{Mujica} and Dineen~\cite{Dineen}.

\subsection*{Organization}
Section~\ref{sec:prelim} fixes the framework and the notion of admissible
holomorphicity. Section~\ref{sec:core} proves the analytic core:
tail-derivative agreement, averaging, and vanishing at constant points.
Section~\ref{sec:rigidity-thm} assembles the main theorem and its
vector-valued corollary. Section~\ref{sec:sharpness} gives the
counterexamples showing each hypothesis is necessary.
Section~\ref{sec:application} discusses the zero-set section problem and its
open lifting lemma. Section~\ref{sec:variants} collects further remarks.

\section{Preliminaries}
\label{sec:prelim}

Let $\ell^\infty=\ell^\infty(\NN)$ be the Banach space of bounded complex
sequences with the supremum norm $\|\cdot\|_\infty$. Let $\czero\subset
\ell^\infty$ be the closed subspace of sequences tending to $0$, and let
$\cc\subset\czero$ be the dense subspace of finitely supported sequences.
Write $e_k$ for the sequence with $1$ in position $k$ and $0$ elsewhere,
and recall $\overline{\cc}=\czero$ in $\|\cdot\|_\infty$.

Let $\Sfin$ denote the group of \emph{finite permutations} of $\NN$: the
bijections $\sigma:\NN\to\NN$ fixing all but finitely many elements. It
acts on $\CC^\NN$ by $(x\circ\sigma)_n=x_{\sigma(n)}$.

\begin{definition}
\label{def:domain}
A subset $X\subset\CC^\NN$ is:
\begin{itemize}[leftmargin=2em]
  \item \emph{permutation-invariant} if $x\circ\sigma\in X$ for every
        $x\in X$ and every $\sigma\in\Sfin$;
  \item \emph{locally box-open} if for every $x\in X$ there is $r>0$ with
        $x+h\in X$ for all $h\in\cc$ with $\|h\|_\infty<r$;
  \item \emph{$\cc$-chain-connected} if for any $x,y\in X$ there is a finite
        sequence $x=z_0,z_1,\ldots,z_m=y$ in $X$ with $z_{\ell+1}-z_\ell\in
        \cc$ and the segment $\{z_\ell+t(z_{\ell+1}-z_\ell):t\in[0,1]\}$
        contained in $X$ for each $\ell$.
\end{itemize}
\end{definition}

The distinction between $\cc$-chain-connectedness and topological
connectedness is central; see Section~\ref{sec:sharpness}. For orientation:
$\czero$ itself is $\cc$-chain-connected (any null sequence is a
sup-norm limit of finitely supported ones, but more to the point any two
finitely supported sequences are joined by a single $\cc$-segment, and the
general case follows by density arguments on the relevant domains), whereas
$\ell^\infty$ is not.

\begin{definition}[Admissible holomorphicity]
\label{def:holomorphic}
A function $f:X\to\CC$ on a locally box-open, permutation-invariant set
$X\subset\CC^\NN$ is \emph{admissibly holomorphic} if it carries a
differential datum $x\mapsto Df(x)$, where each $Df(x):\cc\to\CC$ is
$\CC$-linear, subject to:
\begin{enumerate}[label=\textup{(\roman*)}, leftmargin=2.8em]
\item \emph{(Coordinatewise holomorphy.)} For every $x\in X$ and every
      $k\in\NN$, the map $\varepsilon\mapsto f(x+\varepsilon e_k)$ is
      holomorphic in a neighborhood of $0\in\CC$.
\item \emph{(Bounded first-order expansion along $\cc$.)} For every $x\in
      X$ there is a constant $C_x$ with $|Df(x)[h]|\le C_x\|h\|_\infty$ for
      all $h\in\cc$, and
      \[
        f(x+h)=f(x)+Df(x)[h]+o(\|h\|_\infty)
        \qquad\text{as }h\to 0\text{ in }\cc.
      \]
\end{enumerate}
\end{definition}

\begin{remark}[Standard holomorphy is admissible]
\label{rem:standard}
Both standard notions of holomorphy on $\ell^\infty$ (or on $\czero$)
supply Definition~\ref{def:holomorphic} with a \emph{bounded} differential.
A Fr\'echet-holomorphic function has a continuous Fr\'echet differential
$Df(x)$, whose restriction to $\cc$ is bounded and supplies~(ii), with~(i)
immediate. A G\^ateaux-holomorphic and locally bounded function---the
standard definition in infinite dimensions, see~\cite{Mujica,Dineen}---
yields, via the one-variable Cauchy estimates applied coordinatewise, a
bounded linear differential on finitely supported directions, giving both
conditions. The boundedness in~(ii) is thus automatic in every standard
setting; we include it explicitly because it is exactly what the averaging
step requires, and because Definition~\ref{def:holomorphic} without it does
not force sup-norm continuity of $f$ along $\cc$.
\end{remark}

\section{The Analytic Core}
\label{sec:core}

Throughout this section $X\subset\CC^\NN$ is locally box-open and
permutation-invariant, and $f:X\to\CC$ is admissibly holomorphic and
$\Sfin$-invariant.

\subsection{The permutation identity}

\begin{lemma}[Base-point identity]
\label{lem:basepoint}
For distinct $i,j\in\NN$ and every $x\in X$,
\[
  Df(x)[e_i] = Df(x\circ\tau_{ij})[e_j],
\]
where $\tau_{ij}\in\Sfin$ is the transposition of $i$ and $j$. In
particular, if $x_i=x_j$ then $x\circ\tau_{ij}=x$ and
$Df(x)[e_i]=Df(x)[e_j]$.
\end{lemma}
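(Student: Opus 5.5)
The plan is to compare the first-order expansions from Definition~\ref{def:holomorphic}(ii) at the two base points $x$ and $x\circ\tau_{ij}$. The key is a short computation showing that the permutation carries the one-parameter family $x+\varepsilon e_i$ onto the family $x\circ\tau_{ij}+\varepsilon e_j$. Write $\tau=\tau_{ij}$ and note that $x\circ\tau\in X$ by permutation-invariance.

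\emph{Step 1: the action on basis vectors.} First I check that $e_i\circ\tau=e_j$. Indeed, $(e_i\circ\tau)_n=(e_i)_{\tau(n)}$, and this equals $1$ exactly when $\tau(n)=i$, that is, when $n=j$. Since the action is linear in the sequence, it follows that
\[
  (x+\varepsilon e_i)\circ\tau=x\circ\tau+\varepsilon e_j
\]
for every $\varepsilon\in\CC$. By local box-openness there is $r>0$ such that $x+\varepsilon e_i\in X$ whenever $|\varepsilon|<r$. For such $\varepsilon$, $\Sfin$-invariance of $f$ then gives
\[
  f(x+\varepsilon e_i)=f(x\circ\tau+\varepsilon e_j),
\]
and also $f(x)=f(x\circ\tau)$.

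\emph{Step 2: differentiate.} Apply (ii) at $x$ with $h=\varepsilon e_i$, and at $x\circ\tau$ with $h=\varepsilon e_j$. In both cases $\|h\|_\infty=|\varepsilon|$. Subtracting the two expansions and using the equalities from Step 1, the function values cancel and we are left with
\[
  \varepsilon\bigl(Df(x)[e_i]-Df(x\circ\tau)[e_j]\bigr)=o(|\varepsilon|)
  \qquad\text{as }\varepsilon\to0 .
\]
Here I have used the $\CC$-linearity of each $Df(\cdot)$ to write $Df(x)[\varepsilon e_i]=\varepsilon\,Df(x)[e_i]$, and likewise at $x\circ\tau$. Dividing by $\varepsilon$ and letting $\varepsilon\to0$ shows that the bracket vanishes, which is the claimed identity.

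\emph{Step 3: the special case.} If $x_i=x_j$, then $(x\circ\tau)_n=x_{\tau(n)}=x_n$ for all $n$: this is trivial for $n\notin\{i,j\}$, and holds at $n\in\{i,j\}$ precisely because $x_i=x_j$. Hence $x\circ\tau=x$, and the identity becomes $Df(x)[e_i]=Df(x)[e_j]$.

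There is no real obstacle here. The only points needing care are getting the direction of the action right in Step 1, so that $e_i$ goes to $e_j$ rather than the reverse, and observing that the differential datum is pinned down uniquely along each coordinate line by the $o(\|h\|_\infty)$ expansion. The latter is what allows us to compare data at different base points, even though $Df$ is a priori only an abstract assignment.
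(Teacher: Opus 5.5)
Your proposal is correct and follows the paper's proof essentially step for step: the same identity $(x+\varepsilon e_i)\circ\tau_{ij}=(x\circ\tau_{ij})+\varepsilon e_j$, invariance of $f$ applied at both base points, then division by $\varepsilon$ and the limit $\varepsilon\to0$. The only difference is that you use the first-order expansion in condition~(ii) to identify the two limits with $Df(x)[e_i]$ and $Df(x\circ\tau_{ij})[e_j]$, whereas the paper cites coordinatewise holomorphy~(i); your citation is the more accurate one, since (i) gives only existence of the one-variable derivatives and not their agreement with the differential datum.
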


\begin{proof}
For $\varepsilon\in\CC$ with $|\varepsilon|$ small, local box-openness
gives $x+\varepsilon e_i\in X$. A direct computation from
$(y\circ\tau_{ij})_n=y_{\tau_{ij}(n)}$ shows
\[
  (x+\varepsilon e_i)\circ\tau_{ij}
  = (x\circ\tau_{ij}) + \varepsilon e_j .
\]
By $\Sfin$-invariance of $f$,
\[
  f(x+\varepsilon e_i)
  = f\bigl((x+\varepsilon e_i)\circ\tau_{ij}\bigr)
  = f\bigl((x\circ\tau_{ij})+\varepsilon e_j\bigr).
\]
Subtract $f(x)=f(x\circ\tau_{ij})$ (again by invariance), divide by
$\varepsilon$, and let $\varepsilon\to0$; coordinatewise holomorphy,
condition~(i), guarantees the two one-variable limits exist and equal the
respective coordinate derivatives, giving
$Df(x)[e_i]=Df(x\circ\tau_{ij})[e_j]$. If $x_i=x_j$ then swapping positions
$i,j$ leaves $x$ unchanged, so $x\circ\tau_{ij}=x$ and the two base points
coincide.
\end{proof}

\begin{remark}
The identity of Lemma~\ref{lem:basepoint} relates \emph{different} base
points whenever $x_i\ne x_j$, and one cannot conclude
$Df(x)[e_i]=Df(x)[e_j]$ without further input. Equality of coordinate
derivatives at a single point is available exactly along coordinates on
which $x$ is constant.
\end{remark}

\subsection{Averaging}

\begin{lemma}[Ces\`aro vanishing]
\label{lem:cesaro}
Fix $x\in X$ and let $C_x$ bound $Df(x)$ as in
Definition~\ref{def:holomorphic}(ii). For any injective sequence
$i_1,i_2,\ldots$ of indices,
\[
  \frac{1}{N}\sum_{k=1}^N Df(x)[e_{i_k}] \longrightarrow 0
  \qquad (N\to\infty).
\]
\end{lemma}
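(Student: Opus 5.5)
The plan is to use only the linearity of $Df(x)$ on $\cc$ and the sup-norm bound in Definition~\ref{def:holomorphic}(ii); neither invariance nor holomorphy in~(i) should be needed. The key observation is that the averaged quantity is itself the differential applied to a single finitely supported vector of small sup norm.

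Fix $N$ and set
\[
  h_N=\frac1N\sum_{k=1}^N e_{i_k}\in\cc .
\]
By $\CC$-linearity of $Df(x)$ on $\cc$,
\[
  \frac1N\sum_{k=1}^N Df(x)[e_{i_k}] = Df(x)[h_N].
\]
Since the indices $i_1,\dots,i_N$ are distinct, the vector $h_N$ has exactly $N$ nonzero coordinates, each equal to $1/N$. Hence $\|h_N\|_\infty=1/N$. This is the one place injectivity enters: a repeated index would stack coefficients and spoil the norm computation. The bound $|Df(x)[h]|\le C_x\|h\|_\infty$ then gives
\[
  \Bigl|\frac1N\sum_{k=1}^N Df(x)[e_{i_k}]\Bigr|\le \frac{C_x}{N}\longrightarrow 0 .
\]

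As a by-product, the same estimate yields a stronger fact that may be worth recording for later use. Take $h=\sum_{k\le N}\theta_k e_{i_k}$ with unimodular $\theta_k$ chosen so that each $\theta_k Df(x)[e_{i_k}]\ge 0$. Then $\|h\|_\infty\le 1$, so
\[
  \sum_{k\le N}|Df(x)[e_{i_k}]|\le C_x .
\]
Thus the coordinate derivatives at $x$ are absolutely summable, which is the familiar identification of $(\czero)^*$ with $\ell^1$. In particular $Df(x)[e_{i_k}]\to0$ outright, which is stronger than Ces\`aro convergence.

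There is no real obstacle. The only points to check are that $h_N$ genuinely lies in $\cc$, so that the bound in~(ii) applies, and that distinctness of the indices is what makes $\|h_N\|_\infty=1/N$ rather than something larger.
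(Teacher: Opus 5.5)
Your argument is correct and is exactly the paper's proof: you write the average as $Df(x)[h_N]$ with $h_N=\tfrac1N\sum_{k=1}^N e_{i_k}\in\cc$, use injectivity to get $\|h_N\|_\infty=1/N$, and apply the bound $|Df(x)[h_N]|\le C_x/N$. Your by-product $\sum_{k\le N}|Df(x)[e_{i_k}]|\le C_x$ is also valid (take $\theta_k=1$ wherever the derivative vanishes). It is strictly stronger than what the paper records.
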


\begin{proof}
Set $h_N=\tfrac1N\sum_{k=1}^N e_{i_k}\in\cc$. Distinct indices give
$\|h_N\|_\infty=1/N$. By linearity, $\tfrac1N\sum_{k=1}^N
Df(x)[e_{i_k}]=Df(x)[h_N]$, and boundedness gives
$|Df(x)[h_N]|\le C_x\|h_N\|_\infty=C_x/N\to0$.
\end{proof}

\subsection{Vanishing at eventually-constant points}

Say $x\in\CC^\NN$ is \emph{eventually constant with value $c$} if
$x_n=c$ for all but finitely many $n$; write $\supp_c(x)=\{n:x_n\ne c\}$,
a finite set, for its \emph{exceptional set}.

\begin{proposition}[Tail vanishing]
\label{prop:tail}
Let $x\in X$ be eventually constant with value $c$. Then $Df(x)[e_i]=0$ for
every index $i\notin\supp_c(x)$; that is, the coordinate derivative
vanishes along the constant tail.
\end{proposition}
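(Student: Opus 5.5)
The plan is to combine Lemma~\ref{lem:basepoint} and Lemma~\ref{lem:cesaro}, applied at the single fixed base point $x$. Put $T=\NN\setminus\supp_c(x)$, the constant tail. Since $\supp_c(x)$ is finite, $T$ is infinite. Every coordinate $x_n$ with $n\in T$ equals $c$.

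\textbf{Step 1: all tail derivatives agree.} For distinct $i,j\in T$ we have $x_i=x_j=c$. The ``in particular'' clause of Lemma~\ref{lem:basepoint} then gives $x\circ\tau_{ij}=x$ and $Df(x)[e_i]=Df(x)[e_j]$. So there is a single number $a\in\CC$ with $Df(x)[e_i]=a$ for every $i\in T$. It matters that both base points are literally the same point $x$, so no comparison between different base points is needed. This is exactly why the argument stays inside the constant tail.

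\textbf{Step 2: averaging kills the common value.} Enumerate $T$ injectively as $i_1,i_2,\ldots$, which is possible because $T$ is infinite. Lemma~\ref{lem:cesaro} at the point $x$ gives
\[
  a=\frac1N\sum_{k=1}^N Df(x)[e_{i_k}]\longrightarrow 0 \qquad (N\to\infty).
\]
The left side is independent of $N$, so $a=0$. Hence $Df(x)[e_i]=0$ for all $i\notin\supp_c(x)$.

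There is no serious obstacle; the only points to check are bookkeeping. First, the differential datum $Df(x)$ appearing in Lemma~\ref{lem:basepoint} must be the same bounded functional used in Lemma~\ref{lem:cesaro}. It is, because condition~(i) identifies $Df(x)[e_k]$ with the one-variable derivative of $\varepsilon\mapsto f(x+\varepsilon e_k)$, via the first-order expansion~(ii) restricted to $h=\varepsilon e_k$. Second, the tail must be infinite so that the injective enumeration exists. Both are immediate from the definitions.
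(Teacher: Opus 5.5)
Your proposal is correct and matches the paper's proof step for step: the fixed-point case of Lemma~\ref{lem:basepoint} gives a common tail value, and the Ces\`aro average from Lemma~\ref{lem:cesaro} over an injective tail sequence is constantly that value yet tends to $0$. Your bookkeeping remark is accurate, and slightly sharper than the paper's wording. It is condition~(ii) applied to $h=\varepsilon e_k$ that identifies the one-variable derivative with $Df(x)[e_k]$; condition~(i) alone does not.
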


\begin{proof}
Let $T=\NN\setminus\supp_c(x)$ be the (cofinite) tail, so $x_i=c$ for all
$i\in T$. For $i,j\in T$ we have $x_i=x_j=c$, so
Lemma~\ref{lem:basepoint} gives $Df(x)[e_i]=Df(x)[e_j]$. Hence there is a
scalar $\lambda$ with $Df(x)[e_i]=\lambda$ for all $i\in T$. Since $T$ is
infinite, choose an injective sequence $i_1,i_2,\ldots$ inside $T$; each
term of the Ces\`aro average in Lemma~\ref{lem:cesaro} equals $\lambda$, so
the average is constantly $\lambda$ while tending to $0$. Therefore
$\lambda=0$, i.e. $Df(x)[e_i]=0$ for every $i\in T$.
\end{proof}

\begin{corollary}[Vanishing at constant points]
\label{cor:constant}
If the constant sequence $x\equiv c$ lies in $X$, then $Df(x)[h]=0$ for all
$h\in\cc$.
\end{corollary}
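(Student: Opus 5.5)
The plan is to read this off Proposition~\ref{prop:tail} and then extend by linearity. Take $x\equiv c$ in $X$. It is eventually constant with value $c$, and its exceptional set is $\supp_c(x)=\{n:x_n\ne c\}=\emptyset$. The tail $T=\NN\setminus\supp_c(x)$ is therefore all of $\NN$, and Proposition~\ref{prop:tail} gives $Df(x)[e_i]=0$ for every $i\in\NN$. We could also argue directly: Lemma~\ref{lem:basepoint} applies to every pair $i\ne j$, since $x_i=x_j=c$, so all coordinate derivatives share a common value $\lambda$. Lemma~\ref{lem:cesaro}, applied to the sequence $i_k=k$, then forces $\lambda=0$.

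Next, extend from basis vectors to all of $\cc$. Any $h\in\cc$ is a finite linear combination $h=\sum_{k\in F}h_k e_k$ with $F=\supp(h)$ finite. By the $\CC$-linearity of $Df(x)$ required in Definition~\ref{def:holomorphic},
\[
  Df(x)[h]=\sum_{k\in F}h_k\,Df(x)[e_k]=0 .
\]
No continuity or boundedness is needed at this stage, because the combination is finite. Boundedness was used only inside Lemma~\ref{lem:cesaro}.

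I expect no real obstacle. The one point to check is that the hypotheses of Proposition~\ref{prop:tail} hold: the constant sequence must itself lie in $X$ (this is assumed), and the standing assumptions of Section~\ref{sec:core} must be in force (local box-openness and invariance). Once the empty exceptional set is noted, the corollary is the special case $T=\NN$ of the tail vanishing.
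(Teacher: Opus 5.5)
Your proposal is correct and follows the paper's proof: you note that the constant sequence has empty exceptional set, apply Proposition~\ref{prop:tail} to get $Df(x)[e_k]=0$ for every $k$, and extend to all of $\cc$ by finite $\CC$-linearity. The direct route you sketch via Lemmas~\ref{lem:basepoint} and~\ref{lem:cesaro} simply unpacks that proposition's proof in the case $T=\NN$.
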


\begin{proof}
A constant sequence has empty exceptional set, so
Proposition~\ref{prop:tail} gives $Df(x)[e_k]=0$ for every $k$. Any
$h\in\cc$ is a finite combination $h=\sum_k h_k e_k$, and linearity of
$Df(x)$ gives $Df(x)[h]=0$.
\end{proof}

\subsection{Constancy along finitely supported segments}

\begin{proposition}[Segment rigidity]
\label{prop:segment}
Let $x\in X$ and $h\in\cc$ be such that the segment
$\gamma(t)=x+t\,h$ lies in $X$ for all $t\in[0,1]$, and suppose
$Df(\gamma(t))[h]=0$ for every $t\in[0,1]$. Then $f(x+h)=f(x)$.
\end{proposition}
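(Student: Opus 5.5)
Consider the one-variable function $g(t)=f(x+th)$ for $t\in[0,1]$. The plan is to show that $g$ is differentiable on $[0,1]$ (one-sided at the endpoints) with $g'(t)=Df(\gamma(t))[h]=0$. Then $g$ is constant by the mean value theorem, applied to real and imaginary parts separately, and in particular $f(x+h)=g(1)=g(0)=f(x)$.

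\emph{Computing $g'$.} Fix $t_0\in[0,1]$ and set $y=\gamma(t_0)\in X$. For real $s$ with $t_0+s\in[0,1]$ we have $\gamma(t_0+s)=y+sh$, and $sh\in\cc$ with $\|sh\|_\infty=|s|\,\|h\|_\infty\to0$. The expansion in Definition~\ref{def:holomorphic}(ii) at the base point $y$ gives
\[
  g(t_0+s)=f(y)+Df(y)[sh]+o(|s|\,\|h\|_\infty)=g(t_0)+s\,Df(y)[h]+o(|s|).
\]
By linearity and the hypothesis, $Df(y)[sh]=s\,Df(y)[h]=0$. Hence $(g(t_0+s)-g(t_0))/s\to0$ as $s\to0$, so $g'(t_0)=0$. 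The case $h=0$ is trivial, so we may assume $\|h\|_\infty>0$, and then the little-$o$ term converts as written.

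A small point needs care. The expansion in (ii) is stated for $h\to0$ in $\cc$ with $x+h\in X$. Local box-openness at $y$ ensures $y+sh\in X$ for small $|s|$, and at interior points $s$ of both signs are allowed. At the endpoints $t_0\in\{0,1\}$ we only get a one-sided derivative. That suffices: a real function that is continuous on $[0,1]$ and differentiable on $(0,1)$ with zero derivative is constant. Continuity at the endpoints itself follows from the same expansion, since $g(t_0+s)-g(t_0)=o(1)$.

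\emph{Concluding.} Write $g=u+iv$. Each of $u,v$ is real-valued, continuous on $[0,1]$, and has zero derivative on $(0,1)$, so each is constant by the mean value theorem. Thus $g(1)=g(0)$.

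I expect no serious obstacle. The only subtle step is justifying that the expansion in (ii) is uniform enough to differentiate $g$ at each $t_0$. This is pointwise at each base point, which is exactly what is needed, because differentiability is a pointwise statement and the mean value theorem requires no uniformity.
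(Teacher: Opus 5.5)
Your proposal is correct and follows essentially the same route as the paper: apply the first-order expansion of Definition~\ref{def:holomorphic}(ii) at each base point $\gamma(t_0)$ to the increment $s\,h$, conclude that $t\mapsto f(x+th)$ has zero derivative everywhere on $[0,1]$, and deduce constancy. Your extra care (one-sided derivatives at the endpoints, continuity there, and the mean value theorem applied to real and imaginary parts separately) fills in details that the paper leaves implicit.
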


\begin{proof}
Consider $\varphi(t)=f(x+t\,h)$ on $[0,1]$. Fix $t_0\in[0,1]$. For small
real $s$, the first-order expansion at the base point $\gamma(t_0)$,
applied to the finitely supported increment $s\,h$, gives
\[
  \varphi(t_0+s)-\varphi(t_0)
  = f\bigl(\gamma(t_0)+s\,h\bigr)-f(\gamma(t_0))
  = s\,Df(\gamma(t_0))[h] + o(|s|\,\|h\|_\infty)
  = o(|s|),
\]
using $Df(\gamma(t_0))[h]=0$. Hence $\varphi$ is differentiable at $t_0$
with $\varphi'(t_0)=0$. As $t_0\in[0,1]$ was arbitrary, $\varphi$ has
identically zero derivative on $[0,1]$ and is therefore constant; in
particular $\varphi(1)=\varphi(0)$, i.e. $f(x+h)=f(x)$.
\end{proof}

\section{The Rigidity Theorem}
\label{sec:rigidity-thm}

The natural domains for the main theorem are those sitting inside a single
translate of $\czero$---equivalently, those whose points share one eventual
value. On such a domain the $\cc$-directional derivative structure controls
the function, and the theorem below gives constancy once the differential
vanishes on $\cc$ throughout. Proposition~\ref{prop:uncond} then records the
extent to which that hypothesis holds without further assumption.

\begin{theorem}[Rigidity on null-sequence domains]
\label{thm:rigidity}
Fix $c\in\CC$ and let $X\subset (c\cdot\mathbf 1)+\czero$ be nonempty,
locally box-open, permutation-invariant, and $\cc$-chain-connected, where
$\mathbf 1=(1,1,\ldots)$. Let $f:X\to\CC$ be admissibly holomorphic with
bounded differential and invariant under all finite permutations. Suppose
moreover that $Df(x)[h]=0$ for all $x\in X$ and all $h\in\cc$. Then $f$ is
constant on $X$.
\end{theorem}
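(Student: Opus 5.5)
The plan is to combine Proposition~\ref{prop:segment} with the $\cc$-chain-connectedness of $X$. This is essentially immediate once the hypotheses are lined up. Because the differential is assumed to vanish on $\cc$ at every point of $X$, the analytic core (tail vanishing and averaging) plays no role in the proof itself. Its only job is to discharge this hypothesis in special cases, for instance on domains of constant sequences via Corollary~\ref{cor:constant}.

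First, fix two points $x,y\in X$. By $\cc$-chain-connectedness there is a chain $x=z_0,z_1,\ldots,z_m=y$ in $X$ with $h_\ell:=z_{\ell+1}-z_\ell\in\cc$. Each closed segment $\gamma_\ell(t)=z_\ell+t\,h_\ell$, $t\in[0,1]$, lies in $X$.

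Second, apply Proposition~\ref{prop:segment} to each link. Since every $\gamma_\ell(t)$ is a point of $X$ and $h_\ell\in\cc$, the standing hypothesis gives $Df(\gamma_\ell(t))[h_\ell]=0$ for all $t\in[0,1]$. The proposition then yields $f(z_{\ell+1})=f(z_\ell)$. The first-order expansion it uses is available at every point of the segment by Definition~\ref{def:holomorphic}(ii). Local box-openness guarantees that the small increments $s\,h_\ell$ stay in $X$. At the endpoints $t_0\in\{0,1\}$, only one-sided derivatives are needed, and a function on $[0,1]$ whose derivative is zero at every point is constant, by the mean value theorem applied to its real and imaginary parts.

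Third, chaining the equalities gives $f(x)=f(z_0)=\cdots=f(z_m)=f(y)$. Since $x,y$ were arbitrary, $f$ is constant on $X$.

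The containment $X\subset(c\cdot\mathbf 1)+\czero$, permutation invariance, and boundedness of the differential are not needed in this deduction. They matter only for the companion statements that verify the vanishing hypothesis. I expect no real obstacle. The one point to check carefully is the endpoint handling in Proposition~\ref{prop:segment}: the little-$o$ expansion at $\gamma(t_0)$ must hold for increments along $h$ for real $s$ of both signs when $t_0$ is interior, and for one sign when $t_0$ is an endpoint. This is fine because the expansion holds for all small $h\in\cc$.
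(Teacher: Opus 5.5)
Your proof is correct and is essentially the paper's argument: take a $\cc$-chain from $x$ to $y$, apply Proposition~\ref{prop:segment} on each link using the standing hypothesis $Df|_{\cc}\equiv 0$, and chain the equalities. You are also right that the coset containment, permutation invariance, and boundedness of the differential play no role in this deduction.
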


\begin{proof}
Given $x,y\in X$, take a $\cc$-chain $x=z_0,\ldots,z_m=y$ as in
Definition~\ref{def:domain}, with each increment
$h_\ell:=z_{\ell+1}-z_\ell\in\cc$ and each segment inside $X$. Along the
$\ell$-th segment the standing hypothesis gives
$Df(z_\ell+t\,h_\ell)[h_\ell]=0$ for all $t\in[0,1]$, so
Proposition~\ref{prop:segment} yields $f(z_{\ell+1})=f(z_\ell)$. Chaining
the equalities gives $f(y)=f(x)$. As $x,y$ were arbitrary, $f$ is constant.
\end{proof}

The theorem carries the hypothesis $Df|_{\cc}\equiv0$ because it is not
implied, at every point of an eventually-constant domain, by the remaining
hypotheses. What symmetry alone yields is the value of the differential at
constant points and along constant tails.

\begin{proposition}[Vanishing at constant points and tails]
\label{prop:uncond}
In the setting of Section~\ref{sec:core}, $Df(c\cdot\mathbf 1)[h]=0$ for
every $h\in\cc$ whenever $c\cdot\mathbf 1\in X$
\textup{(Corollary~\ref{cor:constant})}; and for any eventually-constant
$x\in X$ with value $c$, $Df(x)[e_i]=0$ for every tail index $i$ with
$x_i=c$ \textup{(Proposition~\ref{prop:tail})}.
\end{proposition}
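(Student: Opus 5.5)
This statement packages two results already proved in Section~\ref{sec:core}, so the plan is to check that the present hypotheses meet theirs and then cite them. The standing assumptions of Section~\ref{sec:core} are that $X$ is locally box-open and permutation-invariant, and that $f$ is admissibly holomorphic and $\Sfin$-invariant. These are exactly the hypotheses under which Lemma~\ref{lem:basepoint}, Lemma~\ref{lem:cesaro}, Proposition~\ref{prop:tail} and Corollary~\ref{cor:constant} were established. Two facts follow directly: the bound $C_x$ in Definition~\ref{def:holomorphic}(ii) is built into admissibility, and the permutation identity needs only coordinatewise holomorphy.

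For the first assertion, the constant sequence $c\cdot\mathbf 1$ lies in $X$ by hypothesis, so Corollary~\ref{cor:constant} applies verbatim and gives $Df(c\cdot\mathbf 1)[h]=0$ for all $h\in\cc$. Recall the mechanism. Every transposition fixes $c\cdot\mathbf 1$, so Lemma~\ref{lem:basepoint} makes all the coordinate derivatives equal to a common value $\lambda$. Lemma~\ref{lem:cesaro}, applied to $h_N=\tfrac1N\sum_{k\le N}e_k$, then gives
\[
  |\lambda| = |Df(c\cdot\mathbf 1)[h_N]| \le C_x\|h_N\|_\infty = C_x/N\to0 ,
\]
so $\lambda=0$. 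Linearity of $Df$ on $\cc$ then extends the vanishing from the $e_k$ to every finite combination $h\in\cc$.

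For the second assertion, fix an eventually-constant $x\in X$ with value $c$. The tail indices $i$ with $x_i=c$ are exactly the complement of $\supp_c(x)$. That complement is cofinite, hence infinite, and this infiniteness is all the Ces\`aro step requires. So Proposition~\ref{prop:tail} gives $Df(x)[e_i]=0$ for every such $i$.

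There is no genuine obstacle here. The only point needing care is the scope of the second claim: it asserts nothing about indices in $\supp_c(x)$. At those indices Lemma~\ref{lem:basepoint} only relates $Df(x)$ to the differential at a different base point $x\circ\tau_{ij}$. The statement, and hence the plan, should deliberately stop at the tail, consistent with the obstruction flagged in Remark~\ref{rem:obstruction}.
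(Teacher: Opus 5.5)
Your proposal is correct and matches the paper exactly. The paper gives no independent proof of this proposition; it simply cites Corollary~\ref{cor:constant} and Proposition~\ref{prop:tail}, and your recap of the mechanism (transpositions fix the point, then Ces\`aro averaging) agrees with those proofs.

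Two cosmetic nits. The bound should be written $C_{c\cdot\mathbf 1}$ rather than $C_x$. Also, identifying the one-variable derivative with $Df(x)[e_i]$ uses the first-order expansion in Definition~\ref{def:holomorphic}(ii), not coordinatewise holomorphy alone.
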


Proposition~\ref{prop:uncond} marks how far the hypothesis
$Df|_{\cc}\equiv0$ of Theorem~\ref{thm:rigidity} holds without further
assumption. In particular, on a domain of constant sequences every
coordinate is a tail coordinate, so the hypothesis holds throughout and $f$
is constant unconditionally. At the exceptional coordinates of a
non-constant point the differential is \emph{not} controlled by this
argument, for a definite reason.

\begin{remark}[The exceptional-coordinate obstruction]
\label{rem:obstruction}
Removing an exceptional coordinate $k$ of a point $x$ (where $x_k\ne c$) by
passing to the ``zeroed'' point $p$ obtained from $x$ by resetting the
$k$-th entry to $c$---at which $k$ is a tail coordinate---does not transport
the vanishing back to $x$. The base-point identity Lemma~\ref{lem:basepoint}
gives $Df(x)[e_k]=Df(x\circ\tau_{kj})[e_j]$, but $x\circ\tau_{kj}$ merely
\emph{moves} the exceptional value from position $k$ to position $j$; it is
not a point at which $k$ is a tail coordinate. Concretely, $x\circ\tau_{kj}$
and $p\circ\tau_{kj}$ disagree at position $j$ (values $x_k$ versus $c$), so
the identity at $x$ does not relate to $p$. Since each eventually-constant
sequence has only finitely many exceptional coordinates, the averaging
argument of Lemma~\ref{lem:cesaro}---which requires infinitely many equal
coordinate derivatives at a \emph{single} base point---does not apply to the
exceptional directions. Whether $Df|_{\cc}\equiv0$ holds at every point
under a stronger analytic hypothesis---for instance continuity of
$x\mapsto Df(x)$ in the supremum norm, available for Fr\'echet-holomorphic
$f$---is open.
\end{remark}

\subsection{The vector-valued corollary}

\begin{corollary}
\label{cor:nosection}
Let $X$ be as in Theorem~\ref{thm:rigidity}, and let $Y$ be a locally
convex complex topological vector space. Suppose $\Sigma:X\to Y$ is
$\Sfin$-equivariant, $\Sigma(x\circ\sigma)=\Sigma(x)$, and such that for
every $\lambda\in Y^*$ the scalar function $\lambda\circ\Sigma$ satisfies
the hypotheses of Theorem~\ref{thm:rigidity}. Then:
\begin{enumerate}[label=\textup{(\roman*)}, leftmargin=2.8em]
\item $\lambda\circ\Sigma$ is constant for every $\lambda\in Y^*$;
\item if $Y^*$ separates the points of $Y$, then $\Sigma$ is constant.
\end{enumerate}
\end{corollary}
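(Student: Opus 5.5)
The plan is to reduce both parts to Theorem~\ref{thm:rigidity} applied one functional at a time, and then use point separation to pass from the scalar functions back to $\Sigma$.

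For part (i), fix $\lambda\in Y^*$. By hypothesis, the scalar function $f_\lambda:=\lambda\circ\Sigma:X\to\CC$ satisfies every hypothesis of Theorem~\ref{thm:rigidity}: it is admissibly holomorphic with bounded differential, $\Sfin$-invariant, and has differential vanishing on $\cc$ at every point of $X$. Invariance also follows directly from the stated equivariance, since $f_\lambda(x\circ\sigma)=\lambda(\Sigma(x\circ\sigma))=\lambda(\Sigma(x))=f_\lambda(x)$. The domain $X$ is the same as in the theorem. Theorem~\ref{thm:rigidity} therefore gives that $f_\lambda$ is constant on $X$. There is no genuine obstacle here; the only care needed is to state explicitly that each hypothesis is supplied by assumption, so that $Df_\lambda|_{\cc}\equiv0$ is not silently derived from Proposition~\ref{prop:uncond}. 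As Remark~\ref{rem:obstruction} explains, that proposition does not give vanishing at exceptional coordinates.

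For part (ii), take $x,y\in X$ and suppose $\Sigma(x)\neq\Sigma(y)$. Since $Y^*$ separates points, some $\lambda\in Y^*$ satisfies $\lambda(\Sigma(x))\neq\lambda(\Sigma(y))$. This contradicts part (i), which says $\lambda\circ\Sigma$ takes the same value at $x$ and $y$. Hence $\Sigma(x)=\Sigma(y)$ for all $x,y\in X$, so $\Sigma$ is constant. Nonemptiness of $X$, assumed in Theorem~\ref{thm:rigidity}, makes this a genuine constant value.

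The only point requiring care is the separation hypothesis in (ii). For a Hausdorff locally convex space it holds automatically by Hahn--Banach, so I would note this. The corollary states it as an assumption to cover non-Hausdorff $Y$, where (i) still holds but (ii) can fail. Neither part needs any new analytic input beyond Theorem~\ref{thm:rigidity}.
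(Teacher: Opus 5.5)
Your proposal is correct and follows essentially the same route as the paper: (i) applies Theorem~\ref{thm:rigidity} to each $\lambda\circ\Sigma$, whose invariance comes from that of $\Sigma$, and (ii) is the same separation-by-functionals contradiction. Your side remarks on Hahn--Banach in the Hausdorff case and on not deriving $Df|_{\cc}\equiv0$ from Proposition~\ref{prop:uncond} are accurate, though the paper does not need them.
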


\begin{proof}
Each $\lambda\circ\Sigma$ is $\Sfin$-invariant because $\Sigma$ is
equivariant, so Theorem~\ref{thm:rigidity} gives~(i). For~(ii), if
$\Sigma(x)\ne\Sigma(y)$ some $\lambda\in Y^*$ separates the values,
contradicting~(i).
\end{proof}

\section{Sharpness: the Hypotheses Are Necessary}
\label{sec:sharpness}

The hypothesis confining the domain to a single $\czero$-coset---equivalently,
$\cc$-chain-connectedness---cannot be dropped, as the following two examples
show. Both are genuinely symmetric and genuinely holomorphic; what fails is
precisely the passage from $\cc$-local to global constancy.

\begin{example}[Two cosets: a disconnected witness]
\label{ex:twocosets}
For $c\in\{0,1\}$ let $S_c=\{x\in\ell^\infty : x_n=c \text{ for all but
finitely many } n\}$ be the eventually-$c$ sequences, and put $X=S_0\cup
S_1$. Each $S_c$ is permutation-invariant and locally box-open, and a
$\cc$-perturbation of a point of $S_c$ remains in $S_c$; thus $X$ is
locally box-open and permutation-invariant. Define $f:X\to\CC$ by $f\equiv
0$ on $S_0$ and $f\equiv 1$ on $S_1$. Then $f$ is $\Sfin$-invariant, and it
is admissibly holomorphic with $Df\equiv 0$ (it is locally constant along
$\cc$, since $\cc$-perturbations do not change the eventual value). Yet $f$
is not constant. The obstruction is exactly that $S_0$ and $S_1$ are
distinct cosets of $\czero$: no finite chain of $\cc$-steps joins them, so
$X$ is not $\cc$-chain-connected.
\end{example}

\begin{example}[A connected witness: Banach limits]
\label{ex:banach}
Let $L\in(\ell^\infty)^*$ be a Banach limit: a continuous linear functional
with $L(\mathbf 1)=1$, invariant under the shift, and hence under every
finite permutation, and extending the ordinary limit on convergent
sequences. Being continuous and linear, $L$ is entire on $\ell^\infty$, so
it is admissibly holomorphic with $DL(x)=L$ for all $x$; its differential is
bounded (by $\|L\|$). Since $L$ vanishes on $\czero\supset\cc$, we have
$DL(x)|_{\cc}=0$ at every point---exactly as the analytic core predicts.
Nonetheless $L$ is \emph{not} constant: $L(0)=0$ while $L(\mathbf 1)=1$.

Here the domain is all of $\ell^\infty$, which \emph{is} connected (indeed
convex), so mere topological connectedness does not suffice for rigidity.
What fails is $\cc$-chain-connectedness: $0$ and $\mathbf 1$ are at
sup-distance $1$ from one another across the quotient
$\ell^\infty/\czero$, and no finite $\cc$-chain bridges them. This example
shows that the restriction to null-sequence domains in
Theorem~\ref{thm:rigidity} is not a technical convenience but a
necessity.
\end{example}

\begin{remark}[Sharpness of the topology]
\label{rem:ell1}
A complementary degeneration occurs if the supremum norm is replaced by a
summing norm. On $\ell^1\cap D^\NN$ the averages
$h_N=\tfrac1N\sum_{k=1}^N e_{i_k}$ have $\|h_N\|_1=1$ for all $N$, so the
Ces\`aro step (Lemma~\ref{lem:cesaro}) fails, and indeed $f(x)=\sum_k x_k$
is a non-constant symmetric holomorphic function. Thus a topology in which
averaged coordinate perturbations become small---such as the supremum
norm on $\czero$---is essential to the averaging mechanism.
\end{remark}

\section{Application: the Zero-Set Section Problem}
\label{sec:application}

We indicate how the rigidity theorem bears on a natural geometric question,
and we state precisely the point at which the argument becomes conditional.

Let $\cH=\cO(B(0,1))$ be the Fr\'echet space of holomorphic functions on
the open unit disk with the compact-open topology, and let $\cD$ be the
space of admissible infinite discrete subsets of $B(0,1)$---sequences with
no limit point inside the disk. By the Weierstrass factorization theorem
the zero-set map
\[
  \pi:\cH\setminus\{0\}\longrightarrow\cD, \qquad f\longmapsto
  (\text{zero multiset of }f),
\]
is surjective. One asks whether $\pi$ admits a holomorphic cross-section
$\sigma:\cD\to\cH$ with $\pi\circ\sigma=\mathrm{id}$.

Model $\cD$ as a quotient $\Seq/S_\infty$, where $\Seq$ is the space of
locally finite sequences in $B(0,1)$; the naturally arising domain of
sequences is $\cc$-chain-connected, since admissible sets are altered one
point at a time by finitely supported moves. The rigidity theorem then
yields a conditional obstruction.

\begin{proposition}[Conditional obstruction]
\label{prop:conditional}
Suppose a cross-section $\sigma:\cD\to\cH$ lifts to an
$\Sfin$-equivariant map $\widetilde\sigma:X\to\cH$ on the associated
$\cc$-chain-connected sequence domain $X$, with $\lambda\circ\widetilde
\sigma$ admissibly holomorphic (bounded differential, differential
vanishing on $\cc$) for every evaluation functional $\lambda\in\cH^*$.
Then no such $\sigma$ exists.
\end{proposition}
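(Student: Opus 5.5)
The proof goes by contradiction, via Corollary~\ref{cor:nosection}. Suppose such a $\sigma$ exists together with its lift $\widetilde\sigma:X\to\cH$. First we check that the hypotheses of Corollary~\ref{cor:nosection} hold with $Y=\cH$.
\begin{itemize}
\item $\cH=\cO(B(0,1))$ with the compact-open topology is a Fr\'echet space, hence locally convex.
\item The dual $\cH^*$ separates points: the point evaluations $\lambda_z(g)=g(z)$, $z\in B(0,1)$, are continuous and already separate points.
\item The domain $X$ is $\cc$-chain-connected by assumption. It is permutation-invariant and locally box-open as the natural sequence domain, since finitely supported small moves of points in the disk stay in the disk. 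It lies in a single translate of $\czero$ as required by Theorem~\ref{thm:rigidity}; this is part of what ``associated $\cc$-chain-connected sequence domain'' means.
\item Each $\lambda\circ\widetilde\sigma$ is $\Sfin$-invariant, because $\widetilde\sigma$ is equivariant.
\item Each $\lambda\circ\widetilde\sigma$ is admissibly holomorphic with bounded differential vanishing on $\cc$, by hypothesis.
\end{itemize}
So Corollary~\ref{cor:nosection}(ii) applies, and $\widetilde\sigma$ is constant, say $\widetilde\sigma\equiv g_0\in\cH$.

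Next we derive the contradiction from the section property. Since $\widetilde\sigma$ lifts $\sigma$, we have $\widetilde\sigma(x)=\sigma([x])$, where $[x]\in\cD$ is the class of $x$ in $\Seq/S_\infty$. Then $\pi\circ\sigma=\mathrm{id}$ gives
\[
[x]=\pi(\widetilde\sigma(x))=\pi(g_0)\qquad\text{for every }x\in X.
\]
So every point of $X$ has the same zero multiset. But $X$ is locally box-open: given $x\in X$, the point $x+\varepsilon e_1$ lies in $X$ for small $\varepsilon\neq0$. It has the same coordinates as $x$ except that the first point of the sequence has moved. A multiset changes when one of its points moves, unless the moved point collides with another entry. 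We can pick $\varepsilon$ to avoid the finitely many entries within a small disk, which is possible because the sequence is locally finite. Hence $[x+\varepsilon e_1]\neq[x]$. This contradicts the constancy of $\pi\circ\widetilde\sigma$, so no such $\sigma$ exists.

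I expect the main obstacle to be bookkeeping, not analysis: making precise that the associated domain $X$ meets every hypothesis of Theorem~\ref{thm:rigidity}. This matters especially for containment in a single coset $(c\cdot\mathbf 1)+\czero$. Sequences accumulating at the boundary of the disk need not converge to one constant, so $X$ has to be taken as the $\cc$-chain component of a fixed base sequence, after a normalization that makes its coordinates differ from a fixed tail by a null sequence. Once $X$ is fixed in this way, the vanishing-differential hypothesis is supplied by assumption, not proved. This is exactly why the proposition is only conditional, in line with Remark~\ref{rem:obstruction}, and the argument above goes through.
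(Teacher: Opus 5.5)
Your argument is the paper's own: Corollary~\ref{cor:nosection}(ii) with $Y=\cH$ makes $\widetilde\sigma$ constant, and $\pi\circ\sigma=\mathrm{id}$ then contradicts the fact that the zero multiset varies over $X$, which you usefully make explicit by moving the single coordinate $x_1$. Point evaluations separating points is all you need there, since the hypothesis is imposed only on evaluations.

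One correction to your bookkeeping: the entries of every point of $X$ accumulate at $|z|=1$, so no single radius serves all $h\in\cc$, and $X$ is not locally box-open in the sense of Definition~\ref{def:domain}. That condition, like the coset condition (which the proof of Theorem~\ref{thm:rigidity} never uses, since $\cc$-chain-connectedness already confines $X$ to one coset of $\cc$), has to be read as presupposed by the statement, as the paper tacitly does, rather than verified. Your last step only needs openness in the direction $e_1$, which does hold.
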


\begin{proof}
Since $\cH$ separates points via evaluations,
Corollary~\ref{cor:nosection}(ii) forces $\widetilde\sigma$ to be constant,
say $\widetilde\sigma\equiv f_0$. But then $\pi(f_0)=\pi(\widetilde\sigma
(x))$ would equal the varying zero set of $x$ for every $x\in X$, which is
absurd.
\end{proof}

The hypothesis is a genuine lifting condition, stated precisely below.

\begin{quotation}\noindent
\textbf{Open Problem (Lifting Lemma).}
Let $Y$ be a locally convex space. Under a suitable complex structure on
$\cD=\Seq/S_\infty$, does every holomorphic map $F:\cD\to Y$ lift to an
$\Sfin$-equivariant map $\widetilde F:\Seq\to Y$ whose scalarizations are
admissibly holomorphic with differentials vanishing on $\cc$?
\end{quotation}

\noindent An affirmative answer would, through
Corollary~\ref{cor:nosection}, upgrade
Proposition~\ref{prop:conditional} to the unconditional statement that
$\pi$ admits no holomorphic section on all of $\cD$. We leave this as the
natural next target.

\begin{remark}[A complementary unconditional obstruction]
Independently of the lifting problem, there is no section of the special
\emph{product} form $\sigma(A)=\prod_{a\in A}E(z,a)$ for a fixed
holomorphic factor $E$: symmetry forces the factor to be uniform, a uniform
factor imposes one fixed summability condition on the zero set for
convergence, and $\cD$ contains admissible sets violating any such
condition. Thus symmetric product sections fail for convergence reasons
unconditionally, while any symmetric non-product section would be obstructed
by rigidity, conditionally on the lifting.
\end{remark}

\section{Further Remarks}
\label{sec:variants}

\begin{enumerate}[label=\textup{(\arabic*)}, leftmargin=2.4em, itemsep=0.5em]

\item \emph{What symmetry alone gives.} The analytic core
(Section~\ref{sec:core}) is valid on any locally box-open
permutation-invariant domain, without chain-connectedness: it yields
tail-derivative vanishing at every point and full vanishing at constant
points. Chain-connectedness enters only to globalize, and
Example~\ref{ex:banach} shows it cannot be omitted.

\item \emph{Relation to de Finetti's theorem.} The result is an analytic
cousin of de Finetti's theorem: exchangeability of a measure on
$\{0,1\}^\NN$ forces dependence only on ``collective'' statistics, whereas
here symmetry forces a holomorphic function on a null-sequence domain to
depend on nothing at all. The Banach-limit example marks the boundary: on
$\ell^\infty$, symmetric ``limit-like'' functionals persist, just as tail
$\sigma$-fields carry exchangeable information the mixture representation
cannot see.

\item \emph{Comparison with finite dimensions.} For each finite $n$ the
$S_n$-invariant holomorphic functions on $\CC^n$ form the polynomial
algebra on $e_1,\ldots,e_n$, each a finite and hence entire sum. In
infinite dimensions these sums diverge on generic sequences;
Proposition~\ref{prop:tail} is the precise derivative-level expression of
that divergence along the constant tail.

\item \emph{Positive results on smaller domains.} On $\ell^2\cap D^\NN$ the
power sums $p_k=\sum_n a_n^k$ converge and are holomorphic for $k\ge3$,
providing non-constant symmetric holomorphic functions. Rigidity is a
feature of the $\czero$ geometry with its averaging mechanism, not of
symmetry as such.

\item \emph{A topological analogue.} The averaging argument is not specific
to holomorphy. The same reasoning shows that a permutation-invariant
function on a $\cc$-chain-connected domain in $\czero$ that is Lipschitz and
G\^ateaux-differentiable along $\cc$, with bounded differential, is
constant. Holomorphy is used only to produce the coordinate derivatives in
Lemma~\ref{lem:basepoint}; the heart of the matter is the metric fact
$\|h_N\|_\infty=1/N\to0$ together with the coset structure of $\czero$
inside $\ell^\infty$.

\end{enumerate}

\section*{Declaration on the Use of Artificial Intelligence}
In preparing this paper, the authors made use of AI language model
assistance (Claude) as a tool in the writing process---including
\LaTeX{} preparation and editing---and in exploratory work, such as
proof-checking and verifying arguments. All mathematical content and
results have been independently reviewed and verified by the authors,
who take full responsibility for the correctness, originality, and
presentation of this work. No AI system is an author of this paper,
nor was any AI system used to generate mathematical claims that were
not subsequently checked by the authors.


\begin{thebibliography}{9}

\bibitem{Dineen}
S.~Dineen,
\emph{Complex Analysis on Infinite Dimensional Spaces},
Springer Monographs in Mathematics, Springer-Verlag, London, 1999.

\bibitem{Mujica}
J.~Mujica,
\emph{Complex Analysis in Banach Spaces},
North-Holland Mathematics Studies, vol.~120, North-Holland, Amsterdam, 1986.

\bibitem{Macdonald}
I.~G.~Macdonald,
\emph{Symmetric Functions and Hall Polynomials},
2nd ed., Oxford University Press, Oxford, 1995.

\bibitem{Arnold}
V.~I.~Arnol'd,
On some topological invariants of algebraic functions,
\emph{Trans. Moscow Math. Soc.} \textbf{21} (1970), 30--52.

\end{thebibliography}
\end{document}